\documentclass[12pt,a4paper]{article}
\usepackage[top=2.5cm,bottom=2.5cm,left=2.5cm,right=2.5cm]{geometry}

\usepackage{amssymb}
\usepackage{amssymb,amsmath,graphicx,color,amsthm}
\usepackage[labelformat=simple]{subcaption}
\usepackage[font=small,labelfont=bf,width=12.5cm]{caption}

\newtheorem{theorem}{Theorem}
\newtheorem{lemma}[theorem]{Lemma}
\newtheorem{corollary}[theorem]{Corollary}
\newtheorem{proposition}[theorem]{Proposition}

\newtheorem{observation}[theorem]{Observation}
\newtheorem{conjecture}[theorem]{Conjecture}
\newtheorem{problem}[theorem]{Problem}
\newtheorem{question}[theorem]{Question}

\newcommand{\set}[1]{\ensuremath{\left\{#1 \right\}}}

\newcommand{\chip}{\ensuremath{\chi_{\rho}}}
\renewcommand{\vec}[1]{\ensuremath{\mathbf{#1}}}

\begin{document}

\title{{\bf Packing coloring of hypercubes\\with extended Hamming codes}}

\author
{
	Petr Gregor\thanks{Charles University, Department of Theoretical Computer Science and Mathematical Logic, Prague, Czech Republic. 
		E-Mail: \texttt{gregor@ktiml.mff.cuni.cz}}, \
	Jaka Kranjc\thanks{General Hospital Novo mesto, Slovenia.
		E-Mail: \texttt{jaka.kranjc@sb-nm.si}}, \ 	
	Borut Lu\v{z}ar\thanks{Faculty of Information Studies in Novo mesto, Slovenia. \newline
		E-Mails: \texttt{borut.luzar@gmail.com}, \texttt{kenny.storgel.research@gmail.com}}  \thanks{Rudolfovo Institute, Novo mesto, Slovenia.}, \
	Kenny \v{S}torgel\footnotemark[3]~\thanks{University of Primorska, FAMNIT, Koper, Slovenia}
}

\maketitle

{
\begin{abstract}
	A {\em packing coloring} of a graph $G$ is a mapping assigning a positive integer (a color) to every vertex of $G$ 
	such that every two vertices of color $k$ are at distance at least $k+1$. 
	The least number of colors needed for a packing coloring of $G$ is called the {\em packing chromatic number} of $G$. 
	In this paper, we continue the study of the packing chromatic number of hypercubes 
	and we improve the upper bounds reported by Torres and Valencia-Pabon 
	({\em P. Torres, M. Valencia-Pabon, The packing chromatic number of hypercubes, Discrete Appl. Math. 190--191 (2015), 127--140})
	by presenting recursive constructions of subsets of distant vertices making use of the properties of the extended Hamming codes.	
	We also answer in negative a question on packing coloring of Cartesian products raised by 
	Bre\v{s}ar, Klav\v{z}ar, and Rall ({\em Problem 5, Bre\v{s}ar et al., On the packing chromatic number of Cartesian products,
	hexagonal lattice, and trees. Discrete Appl. Math. 155 (2007), 2303--2311.}).
\end{abstract}
}

\bigskip
{\noindent\small \textbf{Keywords:} packing coloring, packing chromatic number, hypercube, Hamming code}

%%%%%%%%%%%%%%%%%%%%%%%%%%%%%%%%%%%%%%%%%%%%%%%%%%%%%%%%%%%%%%%%%%%%%%%%%%%
%%%%%%%%%%%%%%%%%%%%%%%%%%%%%%%%%%%%%%%%%%%%%%%%%%%%%%%%%%%%%%%%%%%%%%%%%%%
\section{Introduction}

A \textit{packing coloring} of a graph $G$ is a mapping assigning positive integers (colors) to the vertices of $G$ 
such that every two vertices of color $k$ are at distance at least $k+1$. 
The least number of colors needed for a packing coloring of $G$ is called the {\em packing chromatic number} of $G$
and denoted by $\chip(G)$.
Packing coloring was first introduced by Goddard et al.~\cite{GodHedHedHarRal08}
under the name \textit{broadcast coloring} due to its application to solving the broadcast assignment problems. 
Since then it has been studied for various graph families;
see, e.g., a recent survey of Bre\v{s}ar et al.~\cite{BreFerKlaRal20}.

Let us mention two among the most important questions in the area,
which were resolved recently.
The first was whether the packing chromatic number of graphs with maximum degree $3$ is bounded by a constant.
While the answer is affirmative for, e.g., infinite $3$-regular trees and hexagonal lattices,
it was answered in the negative for the general case by Balogh, Kostochka, and Liu~\cite{BalKosLiu18}
with a probabilistic approach. Independently, a constructive evidence that graphs with maximum degree $3$ can have
arbitrarily large packing chromatic number was provided by Bre\v{s}ar and Ferme~\cite{BreFer18}.

The second question was what is the exact packing chromatic number of the infinite square grid.
The lower and upper bounds for its packing chromatic number have been improving 
by different authors over a series of nine papers, 
and finally resolved in 2023 by Subercaseaux and Heule~\cite{SubHeu23},
who proved that the packing chromatic number of the infinite square grid is equal to $15$.

Apart from the abovementioned, 
Cartesian products of graphs are also an intriguing family.
Its standard representatives are hypercubes,
for which, despite their regular structure, 
the exact values of the packing chromatic numbers are not known even 
for the case of the $9$-dimensional cube.
Study of packing coloring of hypercubes was initiated by Goddard et al.~\cite{GodHedHedHarRal08},
who established exact values of the packing chromatic numbers for $Q_n$, with $n \in \set{1,2,3,4,5}$ (see Table~\ref{tbl:bounds}).
The authors of~\cite{GodHedHedHarRal08} also noted that asymptotically,
$$
	\chip(Q_n) \in \Theta\Bigg(\bigg(\frac{1}{2}-O\Big(\frac{1}{n}\Big)\bigg) \cdot 2^n \Bigg)\,.
$$
Torres and Valencia-Pabon~\cite{TorVal15} continued the investigation by 
proving that $\chip(Q_6) = 25$, $\chip(Q_7) = 49$, and $\chip(Q_8) = 95$.
They additionally improved the general upper bound for the packing chromatic number.
\begin{theorem}[Torres and Valencia-Pabon~\cite{TorVal15}]
	\label{thm:torpab}
	For every integer $n \ge 4$, it holds that
	$$
		\chip(Q_n) \le 2^n \bigg( \frac{1}{2} - \frac{1}{2^{\lceil \log_2 n \rceil}} \bigg) - 2 \bigg \lfloor \frac{n-4}{2} \bigg \rfloor + 3\,.
	$$
\end{theorem}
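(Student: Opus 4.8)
The plan is to exhibit an explicit packing coloring of $Q_n$ meeting the stated value. Set $r = \ceil{\log_2 n}$, so $2^{r-1} < n \le 2^r$, and split $V(Q_n)$ into the even-weight class $V_0$ and the odd-weight class $V_1$, each of size $2^{n-1}$. Since $V_0$ is independent, I would assign color $1$ to all of $V_0$; it then remains to packing-color $V_1$ with colors $2,3,\dots$. The key elementary observation is that any two vertices of $V_1$ lie at \emph{even} Hamming distance, so a color $k$ placed inside $V_1$ only forces pairwise distance at least $2\ceil{(k+1)/2}$; in particular colors $2$ and $3$ each need merely distance $\ge 4$, colors $4$ and $5$ need distance $\ge 6$, and in general the pair $\set{2j,\, 2j+1}$ needs distance $\ge 2j+2$.

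To exploit this I would first cut $V_1$ into large sets of pairwise distance $\ge 4$ via a Hamming code. Embed $Q_n$ as the subcube of $Q_{2^r}$ on which the last $2^r - n$ coordinates vanish, and let $C$ be the $[2^r,\, 2^r-1-r,\, 4]$ extended Hamming code. Its dual is the first-order Reed--Muller code, of minimum weight $2^{r-1} > 2^r - n$, so $C$ surjects onto the last $2^r-n$ coordinate positions; hence the shortening $C'$ of $C$ to the first $n$ coordinates is an $[n,\, n-1-r,\, \ge 4]$ even-weight code. Each coset of $C'$ is again a code of minimum distance $\ge 4$ and of constant weight parity, and counting cosets shows the odd-parity cosets of $C'$ partition $V_1$ into exactly $2^r$ such codes $D_1,\dots,D_{2^r}$, each of size $2^{n-1-r}$. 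I would give color $2$ to $D_1$ and color $3$ to $D_2$, legally coloring $2^{n-r}$ vertices with two colors.

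The remaining $2^{n-1}-2^{n-r}$ vertices of $V_1$ will carry colors $4,5,6,\dots$, and the gain over ``one fresh color per vertex'' comes from reusing the smallest of these on \emph{pairs}. For every $j$ with $2 \le j \le \floor{(n-2)/2}$ there are two odd-weight vertices of $Q_n$ at Hamming distance at least $2j+2$ --- e.g.\ a weight-$1$ vertex and a weight-$(2j+1)$ vertex with disjoint supports, which fit because $2j+2 \le n$ --- so I would put color $2j$ on one such pair and $2j+1$ on a disjoint such pair, with all these pairs chosen disjoint from $D_1 \cup D_2$ and from each other (possible because the leftover has size at least $2^{n-2}$ and contains abundantly many pairs at every required even distance $\le n$). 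Every still-uncolored vertex of $V_1$ then gets its own fresh color. After a routine check of the distance constraints, the colors used total
$$
	1 + 2 + 2\floor{\tfrac{n-4}{2}} + \Big(2^{n-1} - 2^{n-r} - 4\floor{\tfrac{n-4}{2}}\Big) \;=\; 2^{n-1} - 2^{n-r} - 2\floor{\tfrac{n-4}{2}} + 3,
$$
which is the claimed bound, since $2^{n-1} - 2^{n-r} = 2^n\big(\tfrac12 - 2^{-\ceil{\log_2 n}}\big)$ and $\floor{(n-2)/2} - 1 = \floor{(n-4)/2}$.

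The step I expect to be delicate is the accounting in the third paragraph: one must confirm that the reused colors are exactly $4,5,\dots$ up to the right cut-off so the saving is precisely $2\floor{(n-4)/2}$ and not off by a constant, that the reused pairs genuinely fit simultaneously and disjointly inside the leftover at the required distances, and that the leftover singletons consume exactly the colors remaining below the target value. A more technical point is the shortening in the second paragraph when $n$ is not a power of $2$: one needs the shortened code to have dimension exactly $n-1-r$ and its odd cosets to split $V_1$ evenly, which is what the Reed--Muller dual-distance bound secures. For the smallest values of $n$ the coloring can simply be written out and verified by hand.
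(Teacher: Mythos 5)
This statement is quoted from Torres and Valencia-Pabon~\cite{TorVal15} and the paper gives no proof of it, so there is nothing internal to compare against; judged on its own terms, your reconstruction is essentially the standard argument (and it is also the template the paper itself follows in Theorem~\ref{thm:generalUpperBound}): color~$1$ on the even class, cosets of a distance-$4$ code inside the odd class for colors $2$ and $3$, antipodal or near-antipodal pairs for the middle colors, and singletons for the rest. Your bookkeeping is right: with $r=\lceil\log_2 n\rceil$ the two odd cosets of the shortened extended Hamming code absorb $2^{n-r}$ vertices, the pairs contribute $p=2\lfloor(n-4)/2\rfloor$ reused colors, and $1+2+p+(2^{n-1}-2^{n-r}-2p)$ gives exactly the claimed value; the dual-distance argument (no nonzero Reed--Muller codeword of weight $2^{r-1}$ fits in the last $2^r-n<2^{r-1}$ positions) correctly pins the shortened dimension at $n-1-r$ so that the $2^r$ odd cosets partition $V_1$ evenly. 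The only step you leave genuinely unfinished is the simultaneous disjoint placement of the pairs, and the one place it is not automatic is the top color for even $n$, which forces two disjoint \emph{diametral} odd pairs outside $D_1\cup D_2$: there are $2^{n-2}$ such pairs and $D_1\cup D_2$ meets at most $2^{n-r}\le 2^{n-3}$ of them for $n\ge 5$ (and for $n=4$ no pairs are needed at all), so at least $2^{n-3}$ survive and the handful of lower-distance pairs, which cost only $O(n)$ vertices, do not interfere. With that counting written out the proof is complete and matches the cited bound, including the exact offset $\lfloor(n-2)/2\rfloor-1=\lfloor(n-4)/2\rfloor$.
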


On the other hand, 
currently the best lower bound follows from the following more general result of Bre\v{s}ar, Klav\v{z}ar, and Rall~\cite{BreKlaRal07}.
\begin{theorem}[Bre\v{s}ar, Klav\v{z}ar, and Rall~\cite{BreKlaRal07}]
	\label{thm:breklaral}
	For any two connected graphs $G$ and $H$ of order at least $2$, it holds that
	$$
		\chi_\rho(G \square H) \geq (\chi_\rho(G) + 1)|H| - \mathrm{diam}(G\square H)(|H|-1)-1\,.
	$$
\end{theorem}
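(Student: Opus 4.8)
The plan is to take an optimal packing coloring $c$ of $G \square H$ using $k := \chi_\rho(G \square H)$ colors and, after a harmless normalization, assume it uses exactly the color set $\{1, \dots, k\}$ (if it uses distinct colors $s_1 < \cdots < s_k$, reassign color $i$ wherever $s_i$ appears; since $s_i \ge i$ the distance constraints only weaken). Writing $n := \chi_\rho(G)$, $m := |H|$, and $d := \mathrm{diam}(G \square H)$, the target inequality is equivalent, after expanding, to $k \ge nm - (d-1)(m-1)$, and I would reach it by double-counting incidences between colors and the $m$ natural copies of $G$ sitting inside $G \square H$.

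First I would record two structural facts. For $h \in V(H)$, the layer $G^h = \{(g,h) : g \in V(G)\}$ is an isometric copy of $G$ in $G \square H$ (distances inside it equal distances in $G$), so $c$ restricted to $G^h$ is a packing coloring of a copy of $G$. The point I expect to need the most care is the observation that any packing coloring of $G$ uses at least $\chi_\rho(G)$ distinct colors --- this is again the normalization trick applied to the layer --- so each of the $m$ layers carries at least $n$ distinct colors. Second, I would use that $d = \mathrm{diam}(G) + \mathrm{diam}(H)$ for Cartesian products: if a color $j$ occurs on vertices $(g,h)$ and $(g',h')$ in two different layers, then $d_G(g,g') + d_H(h,h') \ge j+1$, whence $j \le d - 1$; equivalently, every color $j \ge d$ occurs in at most one layer.

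Now let $\ell_j$ be the number of layers containing color $j$. Summing the ``at least $n$ colors per layer'' bound over all layers gives $\sum_{j=1}^{k} \ell_j \ge nm$. On the other hand $\ell_j \le m$ trivially, and $\ell_j \le 1$ for $j \ge d$ by the second fact, so in the principal case $k \ge d$ we obtain $nm \le \sum_{j=1}^{k} \ell_j \le (d-1)m + (k-d+1)$, which rearranges to $k \ge nm - (d-1)(m-1)$. The leftover case $k < d$ is degenerate: then all colors lie below $d$, so $nm \le km$ forces $k \ge n$, and the one-line estimate $nm - (d-1)(m-1) \le nm - k(m-1) = k + (n-k)m \le k$ finishes it. Rewriting $nm - (d-1)(m-1)$ as $(\chi_\rho(G)+1)|H| - \mathrm{diam}(G \square H)(|H|-1) - 1$ recovers the stated form. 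The only real obstacle here is conceptual rather than computational: making the count use \emph{distinct} colors per layer (hence the normalization lemma), rather than the naive and false claim that a layer must use the colors $1, \dots, \chi_\rho(G)$.
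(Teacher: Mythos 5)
This theorem is quoted in the paper from Bre\v{s}ar, Klav\v{z}ar, and Rall and is not proved there, so there is no in-paper proof to compare against; I can only assess your argument on its own terms, and it is correct. The reduction of the right-hand side to $nm-(d-1)(m-1)$ checks out, the two structural facts are sound (layers $G^h$ are isometric because $d_{G\square H}((g,h),(g',h'))=d_G(g,g')+d_H(h,h')$, and the same distance formula gives both the normalization lemma for layers and the fact that a color $j\ge d$ cannot recur across layers), and the double count $nm\le\sum_j\ell_j\le(d-1)m+(k-d+1)$ rearranges exactly to the target; the degenerate case $k<d$ is also handled correctly via $k\ge n$. You were right to flag the normalization step as the delicate point: the claim is that a layer uses at least $\chi_\rho(G)$ \emph{distinct} colors, not the colors $1,\dots,\chi_\rho(G)$ themselves, and your relabeling argument (an $s_i$-packing is an $i$-packing since $s_i\ge i$) closes that gap cleanly. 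This layer-counting scheme is essentially the natural route to the bound and, as far as the comparison goes, it is the same kind of argument used in the original source; the hypothesis that $G$ and $H$ are connected enters only through the identity $\mathrm{diam}(G\square H)=\mathrm{diam}(G)+\mathrm{diam}(H)$, which you correctly invoke.
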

As remarked in~\cite{TorVal15}, 
since $Q_n = Q_{n-1} \square K_2$, Theorem~\ref{thm:breklaral} implies that
\begin{equation}
	\label{eq:lower}
	\chip(Q_n) \ge 2\chip(Q_{n-1}) - (n-1)\,.
\end{equation}

In this paper, we improve the upper bound for the packing chromatic number of hypercubes 
by presenting recursive constructions of subsets of distant vertices.
We do that by making use of the properties of the extended Hamming codes in hypercubes 
having dimensions equal to powers of $2$ and their projections to hypercubes of arbitrary dimensions;
our main results are stated in Theorem~\ref{thm:generalUpperBound} and Corollary~\ref{cor:powerof2}.

The paper is structured as follows. In Section~\ref{sec:prel}, 
we introduce notation and main definitions.
In Section~\ref{sec:upper}, we prove the new upper bound for the packing chromatic number of hypercubes,
and in Section~\ref{sec:lower}, we discuss potential improvements of the current lower bound.
Finally, in Section~\ref{sec:con}, we conclude the paper with a short discussion on further work.

%%%%%%%%%%%%%%%%%%%%%%%%%%%%%%%%%%%%%%%%%%%%%%%%%%%%%%%%%%%%%%%%%%%%%%%%%%%
%%%%%%%%%%%%%%%%%%%%%%%%%%%%%%%%%%%%%%%%%%%%%%%%%%%%%%%%%%%%%%%%%%%%%%%%%%%
\section{Preliminaries}
\label{sec:prel}

We denote the vertex set and the edge set of a graph $G$ by $V(G)$ and $E(G)$, respectively.
The \textit{hypercube} $Q_n$ of dimension $n$, for any integer $n \ge 1$, is the graph with the vertex set 
$V(Q_n)=\mathbb{Z}_2^n$ and two vertices connected if and only if they differ in exactly one coordinate. 
Hence, $|V(Q_n)| = 2^n$ and $|E(Q_n)| = n\cdot 2^{n-1}$. 
We call the vertices with even number of $1$'s {\em even}, otherwise they are {\em odd}.

Generally, elements of $\mathbb{Z}_2^n$ are called {\em binary vectors} (of length $n$).
Note that in this paper, by vector we mean the row vector.
We denote the binary vector with all coordinates equal to $0$ (resp. $1$) by $\vec{0}$ (resp. $\vec{1}$).
A {\em unit vector $e_i$} is a binary vector with the $i$-th coordinate equal to $1$, and all the others equal to $0$.
Usually, the length of a unit vector will be clear from the context;
however, in the cases when we emphasize that its length is $n$, then we denote it by $e_{i,n}$.
The \textit{Hamming distance} between two binary vectors $u$ and $v$, denoted by $d(u,v)$, is the number of coordinates they differ in. 
This is precisely the distance between $u$ and $v$ in $Q_n$. 
The {\em complement} of a binary vector $u$ is the vector $\overline{u}$ such that the Hamming distance between $u$ and $\overline{u}$ is $n$.
We call a pair of vertices $u$ and $\overline{u}$ {\em diametral}.

Whenever we refer to a specific subset of $\mathbb{Z}_2^n$, we use the terminology from coding theory.
In particular, a {\em (binary) code $C$ of length} $n$ is any subset of $\mathbb{Z}_2^n$ and every element of $C$ is called a {\em codeword}. 
The cardinality of the code $C$ is called the {\em size} of $C$. 
The {\em minimum distance} of $C$, denoted by $d(C)$, is the minimum Hamming distance taken over all pairs of distinct codewords of $C$. 
A code $C$ is {\em linear} if a linear combination of any its two codewords $u$ and $v$ 
(i.e., the codeword obtained by summing $u$ and $v$ coordinate-wise) is again a codeword of $C$.
Clearly, every linear code contains the element $\vec{0}$. 
%A \textit{repetition code} of length $n$ is a two element code $R_n = \{\vec{0} ,\vec{1}\} \subseteq \mathbb{Z}_2^n$. 

%An {\em $(A;A+B)$-code} is the code $C$ of length $2n$ obtained from two linear codes $A$ and $B$ of length $n$ 
%such that $C = \set{(u;u+v) \ | \ u \in A, v \in B}$,
%where by semicolon we denote the concatenation of two codewords. 

For a binary vector $w$ and a code $C$ (both of length $n$), the set $C + w = \{c + w \,;\, c \in C\}$ is a {\em coset} of the code $C$, 
and $w$ is the {\em coset vector}. 
Note that for every $w \in \mathbb{Z}_2^n$, we have $d(C) = d(C + w)$.

For any positive integer $m$, let $M_m$ be a binary $m \times (2^m-1)$ matrix whose columns 
consist of all nonzero binary vectors of length $m$ in the lexicographic ordering. 
Such a matrix is called a {\em parity check matrix} of the {\em Hamming code $H_m$},
%whose codewords are exactly all binary vectors $v\in \mathbb{Z}_2^m$ such that $M_m v = \vec{0}$~\cite{MacSlo77}.
which is defined as the kernel of $M_m$, i.e., $H_m = \set{v \in \mathbb{Z}_2^{2^m-1} \ | \ M_m \cdot v^\top = 0}$ (with the arithmetics over the field $\mathbb{Z}_2$).
Note that $\vec{0} \in H_m$, since $M_m \cdot \vec{0}^\top = \vec{0}$. 
Consequently, Hamming codes are linear codes with minimum distance~$3$, length $n = 2^m - 1$, and size $2^{n-m}$. 
It is known that Hamming codes exist for all $m$~\cite{Ham50},
and this implies that every hypercube $Q_n$, with $n = 2^{m}-1$, 
contains a {\em perfect dominating set}, 
i.e., a set of vertices $H_m$, which are pairwise at distance at least $3$
such that every vertex $v \in V(Q_n) \setminus H_m$ has exactly one neighbor in $H_m$. 
In fact, a Hamming code of length $n = 2^m - 1$ corresponds precisely to a perfect dominating set in $Q_n$. 
Note that, due to symmetry, Hamming codes of length $m \ge 3$ are not unique.

The {\em extended Hamming code $\hat{H}_m$} is obtained from a Hamming code $H_m$ 
by prepending an extra {\em parity bit} to every codeword, 
i.e., to every codeword a coordinate $0$ or $1$ is prepended such that the sum of $1$'s is always even.
The {\em parity check matrix} $\hat{M}_m$ of the extended Hamming code $\hat{H}_m$ is obtained from a parity check matrix of 
the Hamming code $H_m$ by adding a column of $m$ zeroes at the beginning and adding a vector of length $2^m$ composed of ones at the bottom.
The length of $\hat{H}_m$ is hence $n = 2^m$, its size is $2^{n-m-1}$, and its minimum distance is $4$.
Clearly, $\hat{H}_m$ is a linear code. For example, $\hat{H}_2 = \set{0000,1111}$.

Similarly as above, every hypercube $Q_n$, with $n = 2^m$, contains a set of vertices $\hat{H}_m$ 
with pairwise distance at least $4$ such that every odd vertex $v \in V(Q_n) \setminus \hat{H}_m$
has exactly one neighbor in $\hat{H}_m$.
%such that their labels correspond to the codewords of the extended Hamming code of length $n$. 
%We will refer to the set of vertices $\hat{H}_m$ simply as to the extended Hamming code in $Q_n$. 

%%%%%%%%%%%%%%%%%%%%%%%%%%%%%%%%%%%%%%%%%%%%%%%%%%%%%%%%%%%%%%%%%%%%%%%%%%%
%%%%%%%%%%%%%%%%%%%%%%%%%%%%%%%%%%%%%%%%%%%%%%%%%%%%%%%%%%%%%%%%%%%%%%%%%%%
\section{New upper bounds}
\label{sec:upper}

In this section, we present constructions of packing colorings of hypercubes using extended Hamming codes $\hat{H}_m$. 
%As noted in Section~\ref{sec:prel}, every $Q_n$ with $n = 2^m$ contains an extended Hamming code . 

%\begin{lemma}
%	\label{lem:cosets}
%	Let $m\ge 3$ and $n = 2^m$. Let $\mathcal{H} = \{(\tilde H_m + e_1) + e_k\mid 1\le k\le n\}$, where $e_k$ denotes a binary codeword of length $n$ with $k$-th component equal to $1$ and $0$ elsewhere. Then any two codes in $\mathcal{H}$ are disjoint.
%\end{lemma}
%
%\begin{proof}
%	Observe that since $\tilde H_m$ is an extended Hamming code, the coset $\tilde H_m + e_1$ consists only of odd codewords and $d(\tilde H_m + e_1) = d(\tilde H_m)\ge 4$.
%	Suppose that for some integers $i$ and $j$ with $1\le i,j\le n$ and $i\not = j$, $(\tilde H_m + e_1) + e_i\cap (\tilde H_m + e_1) + e_j\not = \emptyset$.
%	This implies that there exist codewords $c_i,c_j\in \tilde H_m + e_1$, $c_i\not = c_j$, such that $d(c_i,c_j) = 2$, a contradiction with the fact that $d(\tilde H_m)\ge 4$.
%\end{proof}
%
%An immediate corollary of Lemma~\ref{lem:cosets} is the following.
%
%\begin{corollary}
%	\label{cor:cosets}
%	Let $m\ge 3$ and $Q_n$ be a hypercube with $n = 2^m$. Then, there exist $n$ disjoint subsets of even vertices of $Q_n$ corresponding exactly to the cosets of $\tilde H_m$.
%\end{corollary}
%
%{\color{red}TO HERE}

\subsection{Piercing the extended Hamming codes}

Before proving the main theorem, we list several auxiliary results.
We begin by introducing a construction of special codes that will be subsets of $\hat{H}_m$ 
with greater minimum distance.
% used in the sequel. 
For every integer $m \ge 2$, we define the code $A_0(m) = \hat{H}_m$.
Furthermore, 
for an integer $i$, with $1 \le i \leq m - 3$, %\footnote{
%We need $m-3$ due to the property in Observation~\ref{obs:A_i}$(i)$.}
the code $A_i(m)$ is defined as
\begin{equation}
	\label{eq:codeAi}
	%A_{i}(m) = (A_{i-1}(m-1);A_{i-1}(m-1)+R_{2^{m-1}})\,.
	A_{i}(m) = \set{uu,u\overline{u} \ | \ u \in A_{i-1}(m-1)}\,.\tag{$*$}
\end{equation}
Thus the length of $A_i(m)$ is $2^{m}$ for every $i$.
From the definition, we also infer that for every two integers $i$ and $m$, 
with $m \ge 3$ and $1 \le i \leq m - 3$, we have
	\begin{itemize}
		\item{} $d(A_{i}(m)) = \min \set{2 \cdot d(A_{i-1}(m-1)), 2^{m-1}}$ and
		\item{} $|A_i(m)| = 2|A_{i-1}(m-1)|$.
	\end{itemize}
Moreover, the facts that $d(A_0(m)) = 4$ and $|A_0(m)| = 2^{2^m-m-1}$, 
imply the following.
\begin{observation}
	\label{obs:A_i}	
	For every two integers $i$ and $m$,  
	with $m \ge 3$ and $1 \le i \leq m - 3$, we have	
	\begin{itemize}
		\item[$(i)$] $d(A_{i}(m)) = 2^{i+2}$ and 
		\item[$(ii)$] $|A_{i}(m)| = 2^{2^{m-i}-m+2i-1}$.
	\end{itemize}
\end{observation}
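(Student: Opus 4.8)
The plan is to prove both parts by induction on $i$, using only the two recurrences displayed just above the statement together with the base values $d(A_0(m)) = 4$ and $|A_0(m)| = 2^{2^m - m - 1}$; no further combinatorial analysis of the codes is needed. Note first that the hypothesis $1 \le i \le m-3$ forces $m \ge 4$, so the range of indices is nonempty only then, and that in the inductive steps below we apply the formulas with the pair $(i-1, m-1)$, whose admissibility ($1 \le i-1 \le (m-1)-3$) follows from $2 \le i \le m-3$.

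For part $(ii)$ I would simply unfold the size recurrence $|A_i(m)| = 2\,|A_{i-1}(m-1)|$. Applying it $i$ times gives $|A_i(m)| = 2^i \cdot |A_0(m-i)|$, and since $m - i \ge 3$ the base value applies: $|A_0(m-i)| = 2^{\,2^{m-i} - (m-i) - 1}$. Collecting exponents yields $|A_i(m)| = 2^{\,i + 2^{m-i} - (m-i) - 1} = 2^{\,2^{m-i} - m + 2i - 1}$, as claimed. Formally this is a one-line induction on $i$, but the computation is immediate.

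For part $(i)$ I would argue by induction on $i$. In the base case $i = 1$ we have $d(A_1(m)) = \min\{2\,d(A_0(m-1)),\, 2^{m-1}\} = \min\{8,\, 2^{m-1}\}$, and since $m \ge 4$ this equals $8 = 2^{3}$. For the inductive step, assume $d(A_{i-1}(m-1)) = 2^{(i-1)+2} = 2^{i+1}$. Then $d(A_i(m)) = \min\{2\cdot 2^{i+1},\, 2^{m-1}\} = \min\{2^{i+2},\, 2^{m-1}\}$, and the constraint $i \le m-3$ gives $i+2 \le m-1$, so the minimum is $2^{i+2}$.

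The only thing that genuinely has to be checked is the resolution of the minimum in part $(i)$: the whole content of the statement is that the ``doubling'' branch $2\,d(A_{i-1}(m-1))$ is the one that governs, and this is precisely what the upper bound $i \le m-3$ on the index guarantees — for larger $i$ the term $2^{m-1}$ would dominate and the minimum distance would stop doubling. So the main (and essentially only) obstacle is the bookkeeping of index ranges, ensuring that at each step both the induction hypothesis for $(i-1,m-1)$ and the inequality $i+2 \le m-1$ are available.
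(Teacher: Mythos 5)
Your proof is correct and follows exactly the route the paper intends: the paper states the two recurrences and the base values for $A_0$ immediately before the observation and leaves the induction implicit, while you carry it out explicitly, including the index bookkeeping ($2 \le i \le m-3$ for the inductive step and $i+2 \le m-1$ to resolve the minimum) that the paper omits. Nothing is missing.
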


In what follows, we describe additional properties of the sets $A_i(m)$, which we will use to prove the main result.

\begin{lemma}
	\label{lem:subset}
	If $A_{i-1}(m-1) \subseteq \hat H_{m-1}$, then $A_i(m) \subseteq \hat H_m$, for $1 \leq i \leq m - 3$.
\end{lemma}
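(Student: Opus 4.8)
The plan is to unwind the recursive definition~\eqref{eq:codeAi} and check the defining kernel condition for $\hat H_m$ directly on codewords of the form $uu$ and $u\overline{u}$. Recall that $\hat H_m$ is the kernel of the parity check matrix $\hat M_m$, which is built from $\hat M_{m-1}$ (or rather from the Hamming matrix $M_{m-1}$) in a structured way; so the natural first step is to express $\hat M_m$ in block form relating it to $\hat M_{m-1}$, and then verify that if $\hat M_{m-1}\cdot u^\top = 0$ then both $\hat M_m \cdot (uu)^\top = 0$ and $\hat M_m \cdot (u\overline u)^\top = 0$.

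First I would fix an explicit description of $\hat M_m$ in terms of $\hat M_{m-1}$. Writing $n' = 2^{m-1}$, the columns of $\hat M_m$ are indexed by the $2^m$ positions; I claim one may take $\hat M_m = \begin{pmatrix} \hat M_{m-1} & \hat M_{m-1} \\ \vec 0 & \vec 1 \end{pmatrix}$, where the top block repeats $\hat M_{m-1}$ twice (this accounts for the lexicographic column order after prepending a zero coordinate), and the last row distinguishes the two halves. (I would double-check this matches the construction of $\hat M_m$ described in the Preliminaries; the precise indexing of columns in the Hamming matrix is where care is needed, but any valid choice of parity check matrix for $\hat H_m$ suffices since the statement is about containment of a specific code.) Then for $u \in \hat H_{m-1}$, compute $\hat M_m \cdot (uu)^\top$: the top block gives $\hat M_{m-1}u^\top + \hat M_{m-1}u^\top = 0$, and the bottom row gives $\vec 1 \cdot u^\top = |u| \bmod 2 = 0$ since $u$ has even weight (as $u \in \hat H_{m-1}$). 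For $u\overline u$: the top block gives $\hat M_{m-1}u^\top + \hat M_{m-1}\overline u^\top = \hat M_{m-1}(u+\overline u)^\top = \hat M_{m-1}\vec 1^\top$, which is $0$ because $\vec 1 \in \hat H_{m-1}$ (the all-ones vector of length $2^{m-1}$ has even weight and lies in the extended Hamming code); and the bottom row gives $\vec 1 \cdot \overline u^\top = (2^{m-1} - |u|) \bmod 2 = 0$.

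So the argument reduces to two facts about $\hat H_{m-1}$: every codeword has even weight, and $\vec 1 \in \hat H_{m-1}$. The even-weight fact is immediate from the definition of the parity bit. The fact $\vec 1 \in \hat H_{m-1}$ holds because the all-ones vector of length $2^{m-1}-1$ lies in the Hamming code $H_{m-1}$ (each row of $M_{m-1}$ sums the entries of a fixed coordinate over all nonzero vectors of length $m-1$, which is $2^{m-2}$, even for $m-1 \ge 2$), and prepending the parity bit $1$ gives the all-ones vector of length $2^{m-1}$. Combining: if $A_{i-1}(m-1) \subseteq \hat H_{m-1}$, then every codeword $uu$ or $u\overline u$ of $A_i(m)$ arises from some $u \in A_{i-1}(m-1) \subseteq \hat H_{m-1}$, hence lies in $\hat H_m$ by the computations above, giving $A_i(m) \subseteq \hat H_m$.

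The main obstacle I anticipate is purely bookkeeping: getting the block decomposition of $\hat M_m$ to genuinely match the lexicographic-column convention fixed in the Preliminaries (the prepended zero column and the appended all-ones row interact with how the recursion splits positions into two halves). Once the correct block form is pinned down, the verification is a two-line linear-algebra computation plus the two elementary weight facts about extended Hamming codes, so I expect no further difficulty; an induction on $m$ is not even needed here since the statement is conditional on $A_{i-1}(m-1) \subseteq \hat H_{m-1}$.
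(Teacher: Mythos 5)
Your proposal is correct and follows essentially the same route as the paper: both decompose $\hat M_m$ into blocks built from $\hat M_{m-1}$ (your form is a row permutation of the paper's) and verify the kernel condition for $uu$ and $u\overline u$ using the two facts that codewords of $\hat H_{m-1}$ have even weight and that $\hat M_{m-1}\cdot\vec 1^\top=\vec 0$. The only cosmetic difference is that you justify the latter via $\vec 1\in\hat H_{m-1}$ with an explicit column count, while the paper states directly that every row of $\hat M_{m-1}$ has even weight.
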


\begin{proof}
	For any integer $m$, let $\hat{M}_m$ be a parity check matrix of the extended Hamming code $\hat{H}_m$ obtained from the 
	lexicographic parity check matrix of the Hamming code $H_m$, 
	i.e., the parity check matrix in which the column vectors are ordered in the lexicographic ordering.
	
	Let $u \in A_{i-1}(m-1)$. Then, by the definition of $\hat{H}_m$, $\hat{M}_{m-1} \cdot u^\top = \vec{0}$. Since the vectors $u$ and $\overline{u}$ both 
	contain an even number of ones, 
	we have that $\vec{1} \cdot u^\top = 0$ and $\vec{1} \cdot \overline{u}^\top = 0$.
	%for $r \in R_{2^{m-1}}$ (recall that $R_{2^{m-1}}$ is a repetition code with two elements $\vec{0}$ and $\vec{1}$). 
	Furthermore, every row of $\hat{M}_{m-1}$ consists of an even number of ones, hence $\hat{M}_{m-1} \cdot \vec{1}^\top = \vec{0}$.
	
	After dividing the matrix $\hat{M}_m$ to submatrices,
	\[\hat{M}_m = \begin{bmatrix} \vec{0}^\top & v_1^\top & \cdots & v_{2^m-1}^\top  \\
		1 & 1 & \cdots & 1\end{bmatrix} = 
		\begin{bmatrix} 0 & 0 & \cdots & 0 & 1 & 1 & \cdots & 1 \\ 
		\vec{0}^\top & v_1^\top & \cdots & v_{2^{m-1}-1}^\top & \vec{0}^\top & v_1^\top & \cdots & v_{2^{m-1}-1}^\top \\ 
		1 & 1 & \cdots & 1 & 1 & 1 & \cdots & 1 \end{bmatrix},\]
	it is easy to see that
	\[\hat{M}_m \cdot \begin{bmatrix} u^\top \\
	(u+r)^\top \end{bmatrix} = \begin{bmatrix} \vec{0} \cdot u^\top + \vec{1} \cdot (u+r)^\top \\ 
	\hat{M}_{m-1} \cdot u^\top + \hat{M}_{m-1} \cdot (u+r)^\top \end{bmatrix} = \begin{bmatrix} 0 \\ 
	\vec{0}^\top \end{bmatrix},\]
	for $r \in \set{ \vec{0}, \vec{1} }$, so $uu,u\overline{u} \in \hat H_m$.
\end{proof}

%\cmt{BL}{We are not using Lemma 2.}
%\begin{lemma}
%	\label{lem:sub}
%	$A_{m-2}(m) \subset \hat H_m$
%\end{lemma}

%\begin{proof}
%	The statement follows directly from the fact, that every row of $M_{m}$ consists of even number of ones.
%\end{proof}

\begin{lemma}
	\label{lem:sub2}
	For every three integers $m$, $j$, and $i$, 
	with $m \ge 2$, $j \ge 1$ and $0 \le i \le m-2-j$, it holds that 
	$$	
		A_{i+j}(m) \subseteq A_i(m)\,.
	$$
\end{lemma}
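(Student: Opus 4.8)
The plan is to induct on $j$, with the case $j = 1$ as the base. For the base case, I would fix $m \ge 3$ (the condition $0 \le i \le m-2-j = m-3$ forces $m \ge 3$) and $0 \le i \le m-3$, and show $A_{i+1}(m) \subseteq A_i(m)$. Here the natural move is a secondary induction on $i$. When $i = 0$, the claim is $A_1(m) \subseteq A_0(m) = \hat H_m$; since $A_0(m-1) = \hat H_{m-1}$ trivially satisfies $A_0(m-1) \subseteq \hat H_{m-1}$, Lemma~\ref{lem:subset} gives exactly $A_1(m) \subseteq \hat H_m = A_0(m)$. For the inductive step of this secondary induction, suppose $A_i(m') \subseteq A_{i-1}(m')$ holds for all admissible $m'$; I want $A_{i+1}(m) \subseteq A_i(m)$. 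Unfolding the recursion~\eqref{eq:codeAi} twice,
\[
A_{i+1}(m) = \set{uu, u\overline{u} \ | \ u \in A_i(m-1)}, \qquad
A_i(m) = \set{vv, v\overline{v} \ | \ v \in A_{i-1}(m-1)}.
\]
By the secondary inductive hypothesis applied in dimension $m-1$, we have $A_i(m-1) \subseteq A_{i-1}(m-1)$, so every $u \in A_i(m-1)$ is also in $A_{i-1}(m-1)$, whence $uu$ and $u\overline{u}$ both lie in $A_i(m)$. This proves $A_{i+1}(m) \subseteq A_i(m)$ and completes the base case $j = 1$.

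For the inductive step on $j$, assume the statement holds for some $j \ge 1$: namely $A_{i+j}(m) \subseteq A_i(m)$ for all $m \ge 2$ and all $i$ with $0 \le i \le m-2-j$. I want to deduce $A_{i+j+1}(m) \subseteq A_i(m)$ whenever $0 \le i \le m-2-(j+1) = m-3-j$. Write $A_{i+j+1}(m) = A_{(i+1)+j}(m)$. Since $0 \le i+1 \le m-2-j$, the outer inductive hypothesis (with $i$ replaced by $i+1$) yields $A_{(i+1)+j}(m) \subseteq A_{i+1}(m)$. Then the base case $j=1$, applicable because $0 \le i \le m-3$, gives $A_{i+1}(m) \subseteq A_i(m)$. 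Chaining these two inclusions completes the induction.

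I expect the only real subtlety to be bookkeeping of the index ranges: one must check at each step that the parameters $(m', i')$ invoked genuinely satisfy the hypothesis $0 \le i' \le m'-2-j'$ (respectively $1 \le i' \le m'-3$ for Lemma~\ref{lem:subset}), since the recursion drops $m$ by one each time a prime is peeled off. Everything else is a direct unwinding of definition~\eqref{eq:codeAi}; no distance or cardinality computation is needed, so Observation~\ref{obs:A_i} is not used here.
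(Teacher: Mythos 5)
Your proof is correct and rests on the same two ingredients as the paper's: Lemma~\ref{lem:subset} for the base case and an induction on $i$ that unfolds the recursion~\eqref{eq:codeAi} and applies the inductive hypothesis in dimension $m-1$. The only difference is organizational --- you establish the consecutive inclusion $A_{i+1}(m)\subseteq A_i(m)$ first and then chain $j$ of them via an outer induction on $j$, whereas the paper carries a general $j$ through a single induction on $i$ (using Lemma~\ref{lem:subset} to get $A_j(m)\subseteq \hat H_m$ at $i=0$); this is essentially the same argument.
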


\begin{proof}
	We proceed by induction on $i$. 
	For $i=0$, we have by Lemma~\ref{lem:subset} that $A_{j}(m)\subseteq A_0(m) = \hat{H}_m$ for any $m\ge 2$ and $j\ge 1$. 
	For $i>0$, we have that $m \ge 3$ and 
	$$
		A_{i+j}(m)=\{uu,u\overline{u} \mid u\in A_{i+j-1}(m-1)\} \subseteq \{uu, u\overline{u} \mid u\in A_{i-1}(m-1)\}=A_{i}(m)\,,
	$$
	since $A_{i-1+j}(m-1)\subseteq A_{i-1}(m-1)$ by the induction hypothesis.
\end{proof}

Let $C$ and $D$ be two codes of length $k$ and $\ell$, respectively, with $k < \ell$. 
We say that $C$ is a {\em restricted subset} of $D$, denoted by $C\subseteq_r D$, 
if $C$ is a subset of the code $D'$ obtained from $D$ 
by restricting to some set of $k$ coordinates.
For example, $\set{00,11} \subseteq_r \set{010,101,011}$ if we restrict to the first and the third coordinate.

\begin{lemma}
	\label{lem:subA0}
	Let $m$ and $i$ be two integers such that $m\ge 4$ and $1 \le i \le m-3$.
	Then, \hbox{$A_{i-1}(m-1)\subseteq_r A_0(m)$}.
	In particular, we can always restrict to the first half of the coordinates.
\end{lemma}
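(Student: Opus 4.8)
The plan is to prove the statement by induction on $i$, exploiting the self-similar doubling structure in the definition of $A_i(m)$ in equation~\eqref{eq:codeAi}. First I would deal with the base case $i=1$: here I need to show $A_0(m-1) = \hat H_{m-1} \subseteq_r A_0(m) = \hat H_m$, with the restriction being to the first $2^{m-1}$ coordinates. This follows directly from the block decomposition of the parity check matrix $\hat M_m$ already worked out in the proof of Lemma~\ref{lem:subset}: if $u \in \hat H_{m-1}$ then $uu \in \hat H_m$ (take $r = \vec 0$ there), and $uu$ restricted to the first half of its coordinates is exactly $u$. Hence every codeword of $\hat H_{m-1}$ arises as the first-half restriction of some codeword of $\hat H_m$, which is precisely $A_0(m-1) \subseteq_r A_0(m)$ with the claimed choice of coordinates.

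For the inductive step, suppose $2 \le i \le m-3$ and assume $A_{i-2}(m-2) \subseteq_r A_0(m-1)$, where the restriction is to the first half (i.e.\ the first $2^{m-2}$) of the coordinates. I would like to lift this to $A_{i-1}(m-1) \subseteq_r A_0(m)$. By definition, $A_{i-1}(m-1) = \{uu, u\overline u \mid u \in A_{i-2}(m-2)\}$, so every codeword $w \in A_{i-1}(m-1)$ has the form $w = uw'$ where $u \in A_{i-2}(m-2)$ occupies the first $2^{m-2}$ coordinates and $w' \in \{u, \overline u\}$ the last $2^{m-2}$. By the induction hypothesis, $u$ is the first-half restriction of some $c \in A_0(m-1) = \hat H_{m-1}$. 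Then, as in the base case, $cc \in \hat H_m = A_0(m)$, and restricting $cc$ to the first $2^{m-1}$ coordinates recovers $c$, whose own first $2^{m-2}$ coordinates give back $u$. So restricting $A_0(m) = \hat H_m$ to the first $2^{m-2}$ coordinates yields (a superset of) the set of all first-half prefixes of $A_{i-2}(m-2)$; but these prefixes are exactly $A_{i-2}(m-2)$ itself, and therefore restricting instead to the first $2^{m-1}$ coordinates and observing that $A_{i-1}(m-1)$ lives on those $2^{m-1}$ coordinates, one concludes $A_{i-1}(m-1) \subseteq_r A_0(m)$ with the restriction to the first half.

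I anticipate the main subtlety to be bookkeeping about \emph{which} coordinates the restriction is taken on, and reconciling the nested doublings: the statement $C \subseteq_r D$ requires exhibiting a single coordinate subset, and here one must track that the length of $A_{i-1}(m-1)$ is $2^{m-1}$ while the length of $A_0(m) = \hat H_m$ is $2^m$, so the correct coordinate set is the first $2^{m-1}$ positions, and one must verify that the codewords of $A_{i-1}(m-1)$ — each of the shape $uu$ or $u\overline u$ with $u \in A_{i-2}(m-2)$ — really do appear as restrictions of $\hat H_m$-codewords to exactly that block. An alternative, cleaner route that avoids some of this chasing is to combine Lemma~\ref{lem:subset} and Lemma~\ref{lem:sub2}: by Lemma~\ref{lem:sub2}, $A_{i-1}(m-1) \subseteq A_0(m-1) = \hat H_{m-1}$, and by the base-case observation above $\hat H_{m-1} \subseteq_r \hat H_m = A_0(m)$ via the first-half restriction; chaining these two containments gives $A_{i-1}(m-1) \subseteq_r A_0(m)$ immediately, with the restriction to the first half of the coordinates, which is exactly the "in particular" clause. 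This second approach is the one I would actually write up, as it reduces the whole lemma to the single matrix-block computation from Lemma~\ref{lem:subset} plus an application of the already-proved Lemma~\ref{lem:sub2}.
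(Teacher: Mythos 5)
Your second, ``cleaner'' route --- which you say is the one you would write up --- is essentially the paper's own proof: the paper deduces $A_{i-1}(m-1)\subseteq \hat H_{m-1}$ (via Lemma~\ref{lem:subset}, where you invoke Lemma~\ref{lem:sub2}, an immaterial difference), notes $\hat H_{m-1}\subseteq_r \hat H_m$ by first-half restriction, and chains the two containments. Your justification of $\hat H_{m-1}\subseteq_r \hat H_m$ via the block decomposition ($u\in\hat H_{m-1}\Rightarrow uu\in\hat H_m$) is correct and only slightly more explicit than the paper's appeal to ``the construction of the extended Hamming codes.''
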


\begin{proof}
	Let $m$ and $i$ be two integers satisfying the assumption.
	By Lemma~\ref{lem:subset}, we have that $A_{i-1}(m-1)\subseteq \hat{H}_{m-1}$.
	Furthermore, by the construction of the extended Hamming codes, 
	we have that $\hat{H}_{m-1}\subseteq_r \hat{H}_m$ 
	(by restricting to the first half of coordinates).
	By the definition of $A_0(m)$, 
	we hence infer that $A_{i-1}(m-1) \subseteq \hat{H}_{m-1}\subseteq_r \hat{H}_m = A_0(m)$.
\end{proof}

\subsection{Main theorem}

We now turn our focus to constructing a packing coloring of the $n$-dimensional hypercube for $n \ge 5$.
 
%
%Although $Q_n$ is a subgraph of $Q_{2^m}$, 
%we cannot project the packing coloring 
%(by cutting excess coordinates of binary vertex representation) 
%to the subcube as colors on some vertices might coincide;
%e.g., the vertices with all same coordinates but the last would
%be colored with $1$ and another color.
%Therefore, we project only the colors of vertices 
%whose binary representation ends with the same coordinates, 
%for example those ending by $2^m - n$ zeroes. 
%\textcolor{red}{In this case the number of ones remains unchanged. (HOW IS THIS RELEVANT?)}
%

Let $m$ be the smallest integer such that $n \le 2^m$.
For every $i$ with $0 \le i \le m-3$,
let $\tilde{A}_i(m) \subseteq A_i(m)$ be the largest subset of vertices of $Q_{2^m}$ 
having the same suffix $s_i$ of length $2^m-n$; i.e., the same last $2^m-n$ coordinates.
Since there are $2^{2^m - n}$ distinct suffixes, we have that
\begin{equation}
	\label{eq:AiNar}
	|\tilde{A}_i(m)| \geq |A_i(m)|/2^{2^m - n} = 2^{2^m(2^{-i}-1)-m+2i-1+n}\,.
\end{equation}
Moreover, since we can add to every codeword of $\tilde{A}_i(m)$ a binary vector 
with $0$'s at first $n$ coordinates and the suffix $s_i$ at the last $2^m-n$ coordinates, 
obtaining a code with the same distance,
we may assume that $s_i$ is an all zeroes vector.

Next, let the {\em narrowed code} $\tilde{A}_i(m;n)$ be a set of vertices of $Q_n$
such that $\tilde{A}_i(m;n) \subseteq_r \tilde{A}_i(m)$ with restriction to the first $n$ coordinates of $\tilde{A}_i(m)$.
Note that $d(\tilde{A}_i(m;n)) = d(\tilde{A}_i(m)) \ge d(A_i(m))$, since all the codewords in $d(\tilde{A}_i(m))$ 
have the same suffix.

We also define the {\em expanded code} $\check{A}_i(m-1;n)$ obtained from the code $A_i(m-1)$
by appending to every codeword the suffix of length $n - 2^{m-1}$ with every coordinate equal to $0$.
Note that by Lemma~\ref{lem:subA0} and the definition of $\tilde{A}_i(m;n)$, 
we have that $\check{A}_i(m-1;n) \subseteq \tilde{A}_i(m;n)$.

For example, for $n=5$, we have $m=3$ and
\begin{align*}
	A_0(2)=&\{0000,1111\}, \check A_0(2;5)=\{00000,11110\}\,, \\
	A_0(3)=&\{00000000, 11110000, 11001100, 11000011, \\
			& 10101010, 10100101, 10011001, 10010110, \\
			& 00001111, 00110011, 00111100, 01010101, \\
			& 01011010, 01100110, 01101001, 11111111\},\\
	\tilde{A}_0(3)=&\{00000000,11110000\}, 
	\tilde{A}_0(3;5)=\{00000,11110\}\,.
\end{align*}

Now, we are ready to prove the main result. 
\begin{theorem}
	\label{thm:generalUpperBound}
	Let $n \ge 5$ and let $m$ be the smallest integer such that $n \le 2^m$. 
	Then,
	\begin{align}
		\label{eq:bound}
		\chip(Q_n) \le 2^{n-1} + 2^{m} - n - \sum_{i=0}^{m-3}2^{i+1}\cdot \max \big(|A_i(m)|/2^{2^m - n}, |A_i(m-1)| \big) \,.
	\end{align}
	Moreover, if $n$ is even, then the inequality is strict.
\end{theorem}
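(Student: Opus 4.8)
The plan is to exhibit an explicit packing coloring of $Q_n$ that uses exactly the colors $1,2,\dots,t$, where $t$ equals the right-hand side of \eqref{eq:bound} (and $t$ is one less when $n$ is even); producing such a coloring proves $\chi_\rho(Q_n)\le t$, and since no colors are skipped the number of colors used is simply the largest one. Color $1$ is assigned to all $2^{n-1}$ even vertices, which form an independent set. The $2^{n-1}$ odd vertices are colored in three layers: small colors $2,\dots,2^{m-1}-1$ on cosets of the narrowed codes $\tilde A_i(m;n)$; a block of ``middle'' colors on pairs of antipodal (for even $n$) or almost-antipodal (for odd $n$) odd vertices; and the leftover odd vertices, one new color each. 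All color classes must be pairwise disjoint, which is automatic between the even vertices and the rest and between singletons, but needs an argument for the first two layers.

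For $0\le i\le m-3$ put $C_i:=\tilde A_i(m;n)$. First I would record the properties of $C_i$: it is a \emph{linear} code (by induction, $A_0(m)=\hat H_m$ is linear, the operation in \eqref{eq:codeAi} is closed under coordinatewise sum because $\overline{u+v}=u+\overline v$, and passing to the zero-suffix subspace and then restricting to the first $n$ coordinates preserves linearity); its minimum distance is $d(C_i)\ge d(A_i(m))=2^{i+2}$ by Observation~\ref{obs:A_i}; its size satisfies $|C_i|\ge\max(|A_i(m)|/2^{2^m-n},\,|A_i(m-1)|)$, the first estimate being \eqref{eq:AiNar} and the second following from $\check A_i(m-1;n)\subseteq\tilde A_i(m;n)$ (Lemma~\ref{lem:subA0}); and, crucially, $C_{m-3}\subseteq C_{m-4}\subseteq\dots\subseteq C_0$, which follows from $A_{i+1}(m)\subseteq A_i(m)$ (Lemma~\ref{lem:sub2} with $j=1$) once the all-zero suffix is fixed throughout (legitimate since every nonempty suffix-class of a linear code has the same size). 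Since each $C_i$ consists of even-weight vectors, a coset $C_i+w$ with $w$ of odd weight lies entirely inside the odd vertices, and — having minimum distance $2^{i+2}\ge k+1$ — it is a valid color class for any color $k$ with $2^{i+1}\le k\le 2^{i+2}-1$. Processing $i$ from $m-3$ down to $0$ and using that each coset of a smaller $C_j$ sits inside a unique coset of every larger $C_i$, one selects for every such pair $(i,k)$ a coset of $C_i$ inside the odd vertices, all pairwise disjoint; a short count (using $|C_i|\le|C_0|=2^{n-m-1}$) shows there is ample room. This colors $\sum_{i=0}^{m-3}2^{i+1}|C_i|$ odd vertices with the colors $2,\dots,2^{m-1}-1$.

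After the first layer more than $2^{n-2}$ odd vertices remain uncolored. If $n$ is even, pairing each odd vertex with its antipode is a perfect matching on the odd vertices, so from the remainder one extracts $n-2^{m-1}$ pairwise disjoint antipodal pairs $\{v,\overline v\}$ and gives the $k$-th one color $2^{m-1}-1+k$; this is valid since those colors are $2^{m-1},\dots,n-1$ and $d(v,\overline v)=n\ge k+1$. Finally the remaining $2^{n-1}-\sum_{i=0}^{m-3}2^{i+1}|C_i|-2(n-2^{m-1})$ odd vertices receive distinct colors $n,n+1,\dots$; the largest color used is $2^{n-1}+2^m-n-1-\sum_{i=0}^{m-3}2^{i+1}|C_i|$, which by $|C_i|\ge\max(|A_i(m)|/2^{2^m-n},|A_i(m-1)|)$ is at most the right-hand side of \eqref{eq:bound} decreased by $1$, giving the strict inequality. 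If $n$ is odd the antipode of an odd vertex is even, so I would instead use pairs of odd vertices at the largest distance $n-1$ realizable between two odd vertices — e.g.\ inside the subcube $\{v:v_n=0\}\cong Q_{n-1}$ (with $n-1$ even), antipodal pairs of its odd vertices; the pertinent graph on the odd vertices is $n$-regular, so the needed $n-1-2^{m-1}$ disjoint such pairs can again be taken from the remainder. Assigning colors $2^{m-1},\dots,n-2$ to these pairs (valid as $n-1\ge k+1$) and distinct colors $n-1,n,\dots$ to the rest makes the largest color exactly $2^{n-1}+2^m-n-\sum_{i=0}^{m-3}2^{i+1}|C_i|$, i.e.\ the bound \eqref{eq:bound}. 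In both parities the leftover count is nonnegative — otherwise the right-hand side of \eqref{eq:bound} would be below $n$, contradicting $\chi_\rho(Q_n)>n$ (which follows from \eqref{eq:lower} together with $\chi_\rho(Q_5)=15$).

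The main obstacle is the simultaneous disjointness in the first two layers: one must be certain that all the required cosets of the various $C_i$, and then all the required (almost) antipodal pairs, can be chosen disjoint inside the set of odd vertices. The nested chain $C_{m-3}\subseteq\dots\subseteq C_0$ is exactly what tames the cosets — it lets us reserve room greedily from the smallest code upward — and the estimate $|C_i|\le 2^{n-m-1}$, combined with the fact that only $O(n)$ extra pairs are needed while more than $2^{n-2}$ odd vertices survive the first layer, makes the pairs fit. The separate treatment of the two parities of $n$, which is the origin of the extra $-1$ for even $n$, is the other point needing care. Everything else — linearity of the $C_i$, the minimum-distance and cardinality estimates, and the arithmetic identifying the largest color with the displayed expression — is routine given Lemmas~\ref{lem:subset}--\ref{lem:subA0} and Observation~\ref{obs:A_i}.
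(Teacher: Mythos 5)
Your overall strategy coincides with the paper's: color $1$ on all even vertices, use subcodes of the extended Hamming code with doubling minimum distance for the colors $2,\dots,2^{m-1}-1$, use pairs of (nearly) antipodal odd vertices for the colors up to $n-2$ (resp.\ $n-1$), and spend one fresh color on each leftover vertex; the final arithmetic is identical. Where you diverge is in the two mechanisms you yourself flag as the delicate points, and one of them has a genuine gap. For the disjointness of the small-color classes, the paper does not select cosets greedily: it places the class for color $c$ inside the coset $\hat H_m+e_c$ of the full extended Hamming code, translated by the $c$-th unit vector. Disjointness of these classes is then automatic, because two unit vectors sum to a weight-$2$ vector, which cannot lie in a linear code of minimum distance $4$. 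Your greedy selection of cosets of the nested codes $C_{m-3}\subseteq\dots\subseteq C_0$ can be made to work, but the ``short count'' you invoke genuinely depends on the unproved equality $|C_0|=2^{n-m-1}$: the number of odd cosets of $C_0$ is $2^{n-1}/|C_0|$, and you need this to be at least roughly $2^{m-1}$, which fails already if $|C_0|$ exceeds $2^{n-m}$. The equality amounts to the claim that the projection of $\hat H_m$ onto the last $2^m-n$ coordinates is surjective; this is true (e.g.\ because the dual of $\hat H_m$ is the first-order Reed--Muller code of minimum distance $2^{m-1}>2^m-n$), but it is a nontrivial fact that must be supplied, and the paper's unit-vector construction sidesteps it entirely (it only ever needs the pigeonhole lower bound $|\tilde A_0(m;n)|\ge|A_0(m)|/2^{2^m-n}$).

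Your second layer is a genuinely different and in fact cleaner route than the paper's. The paper insists on keeping every color class for $c\le n-2$ inside the coset $C_{c,n}$, which forces an intricate case analysis around $\vec 1_o$ and a recoloring step for the last two colors; you instead draw the required $O(n)$ pairs from the more than $2^{n-2}$ odd vertices left uncolored after the first layer, which buys a much shorter argument and the same count. Two small things still need writing down there: for even $n$, that the number of surviving antipodal pairs, $2^{n-2}-\sum_i 2^{i+1}|C_i|\ge 2^{n-m}$, exceeds the $n-2^{m-1}$ pairs needed; and for odd $n$, that regularity of the distance-$(n-1)$ graph on odd vertices does not by itself give a matching in the induced subgraph on the uncolored vertices --- you need a one-line edge count (a maximal matching of size $s$ covers all edges with $2s$ vertices of degree at most $n$, while the induced subgraph retains at least $n\cdot 2^{n-m}$ edges). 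With the surjectivity fact and these two counts supplied, your proof is correct; as written, the coset-disjointness count is the one step that does not yet stand on its own.
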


\begin{proof}
	We prove the theorem by constructing a packing coloring of $Q_n$ using at most the number of colors specified in~\eqref{eq:bound}.
	First, we color all even vertices of $Q_n$ by color $1$, thus coloring $2^{n-1}$ vertices with one color.

%Let $\mathcal{C}^{\hat{H}_m} = \set{C_1,\dots,C_n}$ be the set of all cosets of $\hat{H}_m$ 
%obtained by using the unit vectors as coset vectors; 
%%namely, $C_i = \hat{H}_m + e_i$ for $1 \le i \le n$.
%%By $w_i$ we denote the coset codeword of $C_i$ (recall that $C_i = \hat{H}_m + w_i$). 
%%Recall that each coset contains $2^{n-m-1}$ vertices. 
%Since the distance between every two codewords in any $C_i$ is at least $4$,
%%for every pair $j,\ell$, with $1\le j,\ell \le n$ and $j \ne \ell$, 
%the cosets $C_j$ and $C_\ell$ are disjoint. 
%%Moreover, since $H_m$ is a perfect dominating set of the hypercube $Q_{n-1}$, 
%we infer that {\em the union of all cosets $C_i$ contains all odd vertices of $Q_n$}.
%%Thus, every odd vertex belongs to exactly one coset $C_i$.
%%Therefore, the size of $\mathcal{C}^{\hat{H}_m}$ is $2^m$, hence $k = 2^m$.
	
	Next, let $\mathcal{C}^{\hat{H}_m}$ denote the set of all cosets 
	of the extended Hamming code $\hat{H}_m$ obtained using the unit vectors; 
	i.e., 
	$$
		\mathcal{C}^{\hat{H}_m} = \set{C_i = \hat{H}_m + e_i \ | \ 1 \le i \le n}\,.
	$$
	Clearly, every $C_i$ contains only odd codewords and $C_i \cap C_j = \emptyset$ 
	for every pair of distinct $i$ and $j$.
	
	Now, as in the definition of narrowed codes, 
	we let $\mathcal{C}_{n}^{\hat{H}_m}$ denote the set of all cosets $C_{i,n}$ 
	obtained from the cosets $C_i$ by restricting to the first $n$ coordinates.	
	In the remainder, for each color $c$ in $\set{2,3,\ldots,n-2}$, 
	we determine a subset of codewords (vertices) from $C_{c,n}$ to be colored with $c$.
	We consider the colors from $\{2,\ldots, 2^{m-1}-1\}$ 
	and the colors from $\{2^{m-1},\ldots, n-2\}$ separately.

	\medskip
	{\bf Case 1}:	
	Let $c$ be a color from $\{2,\ldots,2^{m-1}-1\}$ 
	and let $i$ be the smallest integer such that $c < 2^{i+2}$.
	Note that since $c \le 2^{m-1}-1$, we have that $0 \le i \le m-3$.
	
	By Observation~\ref{obs:A_i} and Lemma~\ref{lem:subset}, 
	we have that $A_i(m) + e_c \subseteq C_c$ 
	and $d(A_i(m) + e_c) = 2^{i+2}$.
	Consequently, we also have that
	$D_c = \tilde{A}_i(m;n) + e_{c,n} \subseteq C_{c,n}$ 
	and $d(D_c) \ge 2^{i+2}$.
	This allows us to color all the vertices in $D_c$ with color $c$ for every $c$.
	
	Similarly, by the definition, we have that
	$E_c = \check{A}_i(m-1;n) + e_{c,n} \subseteq C_{c,n}$ and $d(E_c) \ge 2^{i+2}$,
	meaning that also all the vertices in $E_c$ can be colored with color $c$.
	
	From~\eqref{eq:AiNar} and the definition of the expanded code, we infer that
	\begin{align}
		\label{eq:DcEc}	
		|D_c| \ge |A_i(m)|/2^{2^m - n} \quad \textrm{and} \quad  |E_c| = |A_i(m-1)|\,.
	\end{align}
	Since for the size of $D_c$ we only have an estimation, 
	we choose the set $D_c$ or $E_C$, for which we have a better estimation, to color its vertices with color $c$,	
	and thus the number of vertices colored with $c$ is by~\eqref{eq:DcEc} 
	$$
		\max(|D_c|, |E_c|) \ge \max(|A_i(m)|/2^{2^m - n}, |A_i(m-1)|) \,.
	$$

	Note that for each $i$ there are exactly $2^{i+1}$ colors such that $i$ is the smallest integer with $c < 2^{i+2}$; 
	namely, the colors $c \in \{2^{i+1},\ldots, 2^{i+2}-1\}$.
	The total number of vertices colored by the colors from $\{2,\ldots,2^{m-1}-1\}$ is hence at least
	\begin{align}
		\label{eq:case1}	
		\sum_{i=0}^{m-3}2^{i+1}\cdot \max(|A_i(m)|/2^{2^m - n}, |A_i(m-1)|)\,.
	\end{align}

	\medskip
	{\bf Case 2}: 
	Let $c$ be a color from $\{2^{m-1},\ldots, n-2\}$.
	We will show that for each $c$ there exist two vertices at distance at least $c+1$, which we will color with $c$.
	
	Let $\vec{1}_o$ denote the binary vector $\vec{1}$ if $n$ is even, 
	and the binary vector with the first $n-1$ coordinates equal to $1$ and the $n$-th coordinate equal to $0$ if $n$ is odd.	
	
	By the construction, we have that $\vec{0} \in \tilde{A}_0(m;n)$. 	
	If $\vec{1}_o \in \tilde{A}_0(m;n)$, then the two vertices $\vec{0} + e_{c,n}$ and $\vec{1}_0 + e_{c,n}$ are both in $C_{c,n}$,
	and we can color them with color $c$, for every $c \in \set{2^{m-1},\dots,n-2}$, 
	since they are at distance at least $n-1$.	
	In fact, in the case when $n$ is even, we also have a pair of vertices for color $c = n-1$,
	since the distance between the two vectors is $n$.
	
	So, we may assume that $\vec{1}_o \notin \tilde{A}_0(m;n)$.
	Then, there is a vertex $x \in \tilde{A}_0(m;n)$ such that $d(x,\vec{1}_o) = 2$, 
	otherwise, for every vertex $x' \in \tilde{A}_0(m;n)$, we would have $d(x',\vec{1}_o) \ge 4$,
	and, consequently, $\vec{1}_o \in \tilde{A}_0(m;n)$, a contradiction.
	Therefore $x + e_{c,n} \in C_{c,n}$	and for every color $c \in \set{2^{m-1},\ldots,n-4}$, 
	the pair of vertices $\vec{0} + e_{c,n}$ and $x + e_{c,n}$ is at distance at least $c+1$ 
	so we can color them with $c$ (if $n$ is even, then we do this also for color $n-3$). 
	
	It remains to show that we can find two pairs of vertices 
	to be colored with colors $n-3$ and $n-2$ (if $n$ is even, with colors $n-2$ and $n-1$).
	Recall that every pair of vertices in $Q_n$ has at most two common neighbors.
	Moreover, since for any two distinct vertices $x'$ and $x''$
	with $d(x', \vec{1}_o) = 2$ and $d(x'', \vec{1}_o) = 2$
	we have that $d(x',x'') \le 4$, 
	it follows that vertices at distance $2$ from $\vec{1}_o$ can be in $\tilde{A}_0(m;n)$,
	but only one of them, say $x'$, can be in $\tilde{A}_j(m;n)$, for any $j \ge 1$,
	by Lemma~\ref{lem:sub2} and Observation~\ref{obs:A_i}$(i)$.
	This means that at most two neighbors of $\vec{1}_o$ (the two neighbors that are also the neighbors of $x'$)
	can be colored with a color $c' \ge 4$.
	Two neighbors of $\vec{1}_o$ can also be colored with colors $2$ and $3$, and
	therefore, in the neighborhood of $\vec{1}_o$, there are at most four colored vertices, 
	say with a subset of colors from $\set{2,3,c_1,c_2}$.
	
	For each $c' \in \set{n-3,n-2}$ (if $n$ is even, let $c' \in \set{n-2,n-1}$), we proceed as follows. 
	If $\vec{1}_o + e_{c',n}$ is not already colored,
	then we color the pair $\vec{0} + e_{c',n}$, $\vec{1}_o + e_{c',n}$ with $c'$; 
	this is possible, since the distance between the two vertices is at least $n-1$ (if $n$ is even, at least $n$). 
	Otherwise, let $c'' \in \set{2,3,c_1,c_2}$ be the color of the vertex $\vec{1}_o + e_{c',n}$; i.e., $\vec{1}_o + e_{c',n} = x + e_{c'',n}$.
	Let $C'_{c'',n} \subseteq C_{c'',n}$ be the set of vertices in $C_{c'',n}$ that are already colored
	and uncolor the vertices in $C'_{c'',n}$ (which includes the vertices $\vec{0} + e_{c'',n}$ and $x + e_{c'',n}$).
	Note that $c'' < n-3$ (if $n$ is even, $c'' < n-2$) 
	and that the vertices in $C_{1,n}$ and $C_{n,n}$ are not colored yet.
	In the case when $c' = n-3$ (if $n$ is even, in the case when $c'=n-1$), 
	we color the vertices in $C'_{1,n} = C'_{c'',n} + e_{c'',n} + e_{1,n}$ with $c''$ and color the two non-colored vertices $\vec{0} + e_{c',n}$ and $\vec{1}_o + e_{c',n}$ with $c'$.
	In the case when $c' = n-2$, we color with $c''$ the vertices in $C'_{n,n} = C'_{c'',n} + e_{c'',n} + e_{n,n}$	
	and color the two non-colored vertices $\vec{0} + e_{c',n}$ and $\vec{1}_o + e_{c',n}$ with $c'$. 
	By this, we are done with Case~2.
	
	\bigskip
	Now, we compute the total number of vertices colored by the colors from $\set{1,\ldots, n-2}$.
	There are $2^{n-1}$ vertices colored by color $1$,
	and so, summing together~\eqref{eq:case1} (by Case~1) and $2 \cdot ((n-2) - 2^{m-1} + 1)$ (by Case~2),
	we infer that at least
	$$
		2^{n-1} + \sum_{i=0}^{m-3}2^{i+1}\cdot \max(|A_i(m)|/2^{2^m - n}, |A_i(m-1)|) + 2 \cdot ((n-2) - 2^{m-1} + 1)\,.
	$$
	vertices are colored.

	All the remaining vertices of $Q_n$ must be colored with a unique color (i.e., a color used only on one vertex), 
	except in the case of even $n$, when we also repeat the color $n-1$ once.
	The number of unique colors is thus at most
	$$
		2^n - \big(2^{n-1} + \sum_{i=0}^{m-3}2^{i+1}\cdot \max(|A_i(m)|/2^{2^m - n}, |A_i(m-1)|) + 2 \cdot ((n-2) - 2^{m-1} + 1) \big) \,.
	$$
	
	Finally, summing the number of non-unique colors together with the number of unique colors we infer that
	$$
		\chip(Q_n) \le 2^{n-1} - \sum_{i=0}^{m-3}2^{i+1}\cdot \max(|A_i(m)|/2^{2^m - n}, |A_i(m-1)|) - n + 2^m\,.
	$$
	Note that due to the argument in Case~2, the inequality is strict if $n$ is even.
	This completes the proof.
\end{proof}

Theorem~\ref{thm:generalUpperBound} and Observation~\ref{obs:A_i} infer the following corollary
for an upper bound on the packing chromatic number of hypercubes 
with dimesions equal to powers of $2$.
\begin{corollary}
	\label{cor:powerof2}
	Let $m\ge 3$ and $n = 2^m$. Then,
	$$
		\chip(Q_n)\le 2^{2^m-1} - 1 - \sum_{i=0}^{m-3}2^{2^{m-i}-m+3i}\,.
	$$
\end{corollary}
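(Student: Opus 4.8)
The plan is to substitute $n = 2^m$ directly into the bound of Theorem~\ref{thm:generalUpperBound} and simplify using Observation~\ref{obs:A_i}. Since $n = 2^m$, the smallest integer with $n \le 2^m$ is $m$ itself, so Theorem~\ref{thm:generalUpperBound} applies with that value of $m$. The key simplification is that $2^m - n = 0$: the suffix length is zero, so the factor $2^{2^m - n}$ equals $1$, the term $2^m - n$ in the bound vanishes, and $2^{n-1} = 2^{2^m - 1}$. Hence the bound reduces to
\[
  \chip(Q_n) \le 2^{2^m-1} - \sum_{i=0}^{m-3} 2^{i+1} \cdot \max\bigl(|A_i(m)|, |A_i(m-1)|\bigr).
\]

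Next I would evaluate the maximum. By Observation~\ref{obs:A_i}$(ii)$ together with the identity $|A_0(m)| = 2^{2^m - m - 1}$ noted before it, we have $|A_i(m)| = 2^{2^{m-i}-m+2i-1}$ for every $i$ with $0 \le i \le m-3$; likewise $|A_i(m-1)| = 2^{2^{m-1-i}-m+2i}$ (and $|A_{m-3}(m-1)|$, which lies outside the range where the recursive construction applies, may simply be taken as $0$ — this case only strengthens the inequality below). Comparing exponents, $(2^{m-i}-m+2i-1) - (2^{m-1-i}-m+2i) = 2^{m-1-i} - 1 \ge 3 > 0$ since $i \le m-3$, so $|A_i(m)| > |A_i(m-1)|$ for every relevant $i$; thus the maximum is always $|A_i(m)|$.

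Substituting, $2^{i+1}\cdot|A_i(m)| = 2^{(i+1) + 2^{m-i}-m+2i-1} = 2^{2^{m-i}-m+3i}$, so the sum becomes exactly $\sum_{i=0}^{m-3} 2^{2^{m-i}-m+3i}$, giving $\chip(Q_n) \le 2^{2^m-1} - \sum_{i=0}^{m-3} 2^{2^{m-i}-m+3i}$. Finally, since $n = 2^m$ is even, Theorem~\ref{thm:generalUpperBound} asserts the inequality is in fact strict; as both sides are integers, we may subtract $1$ from the right-hand side, which is precisely the claimed bound. I do not anticipate a real obstacle: the argument is essentially bookkeeping, and the only point needing a moment's care is verifying that $|A_i(m)|$ dominates $|A_i(m-1)|$ so that the $\max$ can be discarded throughout (together with the harmless remark about $A_{m-3}(m-1)$).
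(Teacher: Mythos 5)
Your derivation is correct and is exactly the route the paper intends: the corollary is stated as an immediate consequence of Theorem~\ref{thm:generalUpperBound} and Observation~\ref{obs:A_i}, and your substitution $2^m-n=0$, the comparison showing $|A_i(m)|>|A_i(m-1)|$, and the use of strictness for even $n$ to subtract $1$ supply precisely the omitted bookkeeping. Your side remark about $A_{m-3}(m-1)$ falling outside the range of the construction is a legitimate and correctly handled detail.
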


In Table~\ref{tbl:upper}, we present a comparison of the upper bounds on the packing chromatic number of hypercubes
established in this paper with the upper bounds established in~\cite{TorVal15}.
\begin{table}[htp!]
	\centering
	\begin{tabular}{|r|*3{|c}|}
		\hline
		$n$ & Upper bound by Theorem~\ref{thm:torpab} & Upper bound by Theorem~\ref{thm:generalUpperBound} & Difference \\ \hline
5	&15			&15			&0		\\ \hline
6	&25			&25			&0		\\ \hline
7	&49			&49			&0		\\ \hline
{\bf 8}	&{\bf 95}			&{\bf 95}			&{\bf 0}		\\ \hline
9	&223		&223		&0		\\ \hline
10	&445		&445		&0		\\ \hline
11	&893		&893		&0		\\ \hline
12	&1\! 787		&1\! 787		&0		\\ \hline
13	&3\! 579		&3\! 571		&8		\\ \hline
14	&7\! 161		&7\! 137		&24		\\ \hline
15	&14\! 329		&14\! 273		&56		\\ \hline
{\bf 16}	&{\bf 28\! 663}		&{\bf 28\! 543}		&{\bf 120}	\\ \hline
17	&61\! 431		&61\! 311		&120	\\ \hline
18	&122\! 869		&122\! 749		&120	\\ \hline
19	&245\! 749		&245\! 629		&120	\\ \hline
20	&491\! 507		&491\! 387		&120	\\ \hline
21	&983\! 027		&982\! 907		&120	\\ \hline
22	&1\! 966\! 065	&1\! 965\! 945	&120	\\ \hline
23	&3\! 932\! 145	&3\! 932\! 025	&120	\\ \hline
24	&7\! 864\! 303	&7\! 864\! 183	&120	\\ \hline
25	&15\! 728\! 623	&15\! 728\! 503	&120	\\ \hline
26	&31\! 457\! 261	&31\! 457\! 013	&248	\\ \hline
27	&62\! 914\! 541	&62\! 914\! 037	&504	\\ \hline
28	&125\! 829\! 099	&125\! 828\! 067	&1\! 032	\\ \hline
29	&251\! 658\! 219	&251\! 656\! 131	&2\! 088	\\ \hline
30	&503\! 316\! 457	&503\! 312\! 257	&4\! 200	\\ \hline
31	&1\! 006\! 632\! 937	&1\! 006\! 624\! 513	&8\! 424	\\ \hline
{\bf 32}	&{\bf 2\! 013\! 265\! 895}	&{\bf 2\! 013\! 249\! 023}	&{\bf 16\! 872}	\\ \hline
{\vdots}	&\vdots	&\vdots	& \vdots \\ \hline
{\bf 64}  	&{\bf 8\! 935\! 141\! 660\! 703\! 064\! 007}	&{\bf 8\! 935\! 141\! 660\! 166\! 125\! 567}			&{\bf 536\! 938\! 440}		\\ \hline
	\end{tabular}
	\caption{Comparison of upper bounds on $\chip(Q_n)$ for small $n$. In bold lines the bounds for hypercubes with dimensions of power of $2$ are given.}	
	\label{tbl:upper}
\end{table}

%%%%%%%%%%%%%%%%%%%%%%%%%%%%%%%%%%%%%%%%%%%%%%%%%%%%%%%%%%%%%%%%%%%%%%%%%%%
%%%%%%%%%%%%%%%%%%%%%%%%%%%%%%%%%%%%%%%%%%%%%%%%%%%%%%%%%%%%%%%%%%%%%%%%%%%
\section{Discussion on lower bounds}
\label{sec:lower}

As already mentioned in the introduction,
the exact value of the packing chromatic number of $Q_9$ is still undetermined.
The current lower and upper bounds for hypercubes $Q_n$, $n \in \set{9,10,11}$,
and the exact values for $n \le 8$ are given in Table~\ref{tbl:bounds}.
Note that the upper bounds for $n \in \set{9,10,11}$ are better
than the ones obtained by Theorems~\ref{thm:torpab} and~\ref{thm:generalUpperBound}, 
since they are obtained by special constructions, 
f\mbox{i}tted exactly to a given $n$~\cite{TorVal15}.

\begin{table}[h]
	\centering
	\begin{tabular}{|r|*3{|c}|}
		\hline
		$n$ 	& 	$\chip(Q_n)$ \\ \hline
		0 	& 	1 				\\ \hline
		1 	& 	2 				\\ \hline
		2 	& 	3 				\\ \hline
		3 	& 	5 				\\ \hline
		4 	& 	7 				\\ \hline
		5 	& 	15 				\\ \hline
		6 	& 	25 				\\ \hline
		7 	& 	49 				\\ \hline
		8 	& 	95 				\\ \hline
		9 	& 	$198 \le \cdot \le 211$		\\ \hline
		10 	& 	$395 \le \cdot \le 421$		\\ \hline
		11 	& 	$794 \le \cdot \le 881$		\\ \hline		
	\end{tabular}
	\caption{The exact values and bounds on the packing chromatic numbers of $Q_n$ for $n \le 11$.}	
	\label{tbl:bounds}
\end{table}

In the remainder of this section, we discuss the approach of coloring one half of the vertices (the even ones)
with color $1$. 
Let us define the coloring formally in a more general setting of bipartite graphs.
A packing coloring \textit{respects bipartition} of a bipartite graph $G$ if all vertices of color $1$ are contained in one of the bipartition sets.
Clearly, all the vertices of the respective bipartition set can be colored with $1$. 
Let us denote the minimum number of colors needed for a packing coloring respecting bipartition by $\chi_\rho^B(G)$.
Then, clearly, we have $\chip(G) \le \chi_\rho^B(G)$.
We conjecture that the two invariants have the same values in the case of hypercubes.
\begin{conjecture}
	\label{conj:oddones}
	For every $n \ge 0$, it holds that 
	$$
		\chi_\rho(Q_n) = \chi_\rho^B(Q_n)\,.
	$$
\end{conjecture}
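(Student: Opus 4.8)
The plan is to establish the nontrivial inequality $\chi_\rho^B(Q_n)\le\chi_\rho(Q_n)$, the reverse being immediate. It is convenient to first record a reformulation: since any packing coloring that respects the bipartition may be modified, at no cost in colors, so that \emph{every} vertex of the relevant bipartition class receives color $1$ (the other vertices of that class, formerly colored $\ge 2$, form an independent set and their neighbours all lie in the opposite class), the conjecture is equivalent to the assertion that \emph{some} optimal packing coloring of $Q_n$ uses color $1$ on one of the two bipartition classes.

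The first step is a purely combinatorial lemma: for every $n\ge 1$, the only independent sets of $Q_n$ of size $2^{n-1}$ are its two bipartition classes. I would prove this by induction on $n$, writing $Q_n=Q^0\cup Q^1$ as two copies of $Q_{n-1}$ joined by a perfect matching. A maximum independent set $I$ of $Q_n$ must split evenly between the two copies (each $Q_{n-1}$ has independence number $2^{n-2}$), so each half is a maximum independent set of $Q_{n-1}$, hence a bipartition class by induction; since $I$ meets the joining perfect matching in no edge, the two halves are forced to be opposite classes, and these glue together to a bipartition class of $Q_n$. Combined with the reformulation above, it then suffices to exhibit, for each $n$, an optimal packing coloring $f$ of $Q_n$ with $|f^{-1}(1)|=2^{n-1}$.

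The second step, which is the heart of the matter, is to show that demanding color $1$ to be as large as possible never costs extra colors. Starting from an arbitrary optimal coloring $f$ with $k=\chi_\rho(Q_n)$ colors, I would attempt a local exchange argument: simultaneously assign color $1$ to all even vertices and reassign each currently-color-$1$ odd vertex to some color in $\{2,\dots,k\}$, using that those classes have been partly emptied on the even side. The global count is favourable — the number of displaced odd vertices, $|f^{-1}(1)\cap O|\le 2^{n-1}-|f^{-1}(1)\cap E|$, does not exceed the number of even vertices freed from the colors $\ge 2$ — and each class $f^{-1}(c)$ with $c\ge 2$ is a set of codewords pairwise at distance at least $c+1\ge 3$, which leaves slack for insertions. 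Formally one wants a \emph{packing transversal}: an assignment $\phi$ sending each displaced odd vertex $s$ to a color $\phi(s)\ge 2$ with $s$ at distance $\ge \phi(s)+1$ from the current $\phi(s)$-class and from every other displaced vertex sent to $\phi(s)$.

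The main obstacle is precisely the existence of such a $\phi$: although the counts balance, the insertions must be made \emph{simultaneously} without creating new distance violations, and a priori a smaller color-$1$ class could enable a thriftier use of the colors $2,3,\dots$, so the naive global-count argument is insufficient. I expect a proof will need a Hall-type or network-flow formulation of the packing-transversal condition, verified using the symmetries of $Q_n$ (in particular the translation swapping its two halves), the sparsity of the higher color classes, and the recursive structure $Q_n=Q_{n-1}\square K_2$ underlying the constructions of Section~\ref{sec:upper}; producing such a verification — or an entirely different argument — is where the real work lies, which is why the statement is posed as a conjecture. In the meantime the conjecture holds for all $n\le 8$, since there the known value of $\chi_\rho(Q_n)$ is realized by a bipartition-respecting coloring, and for larger $n$ it is intertwined with pinning down $\chi_\rho(Q_n)$ exactly.
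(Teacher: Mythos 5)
There is no proof to compare against: the statement you were given is posed in the paper as Conjecture~\ref{conj:oddones}, an open problem, and the paper offers no argument for it (only the observation that $\chi_\rho(Q_n)\le\chi_\rho^B(Q_n)$ and the conditional consequences for $Q_9$, $Q_{10}$, $Q_{11}$). Your proposal, to its credit, is honest about this, but as a proof it has a genuine and fatal gap, and the gap sits exactly where the open content of the conjecture lies.

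Your preliminary reductions are correct but essentially free: recoloring the entire bipartition class containing $f^{-1}(1)$ with color $1$ never hurts, and the fact that the only maximum independent sets of $Q_n$ are the two bipartition classes is a standard induction (and is not even strictly needed once you have the first reduction). The heart of the matter is your ``packing transversal'' $\phi$, and here the argument does not go through. The global count $|f^{-1}(1)\cap O|\le 2^{n-1}-|f^{-1}(1)\cap E|$ only says that the number of displaced odd vertices does not exceed the number of even vertices vacated from the classes $c\ge 2$; it gives no control over \emph{where} those vacancies are relative to the distance constraints. A displaced odd vertex $s$ may lie within distance $c$ of a surviving (odd) member of the class $f^{-1}(c)$ for every $c\ge 2$, in which case it cannot be inserted into any class at all, regardless of how many even vertices were removed; a Hall-type condition cannot be verified because the relevant bipartite ``insertability'' graph may simply have isolated vertices on the displaced side. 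More fundamentally, the conjecture asserts that forcing $f^{-1}(1)$ to be a full bipartition class never forces a thriftier structure on the colors $2,3,\dots$ to collapse, and no local exchange starting from an arbitrary optimal coloring is known to establish this. So the proposal reduces the conjecture to an equivalent (and equally open) combinatorial statement rather than proving it; the verification for $n\le 8$ that you mention is the only part that is actually established, and it is already implicit in the paper's Table~\ref{tbl:bounds}.
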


If packing coloring respecting bipartition can indeed always be extended to an optimal packing coloring of a hypercube,
it would infer tight bounds for the packing chromatic numbers of $Q_9$ and $Q_{10}$.
Namely, the lower bounds for both numbers could be increased to the current upper bounds of $211$ and $421$ colors
by considering the maximum sizes of sets of vertices with even distances.
Let $A(n,d)$ denote the maximum size of the set of vertices in $Q_n$ at mutual distance at least $d$.
For small hypercubes exact values of $A(n,d)$ are known and 
we list some of them in Table~\ref{tbl:codes}~\cite{AgrVarZeg01}.
\begin{table}[h]
	\centering
	\begin{tabular}{|r|*3{|c}|}
		\hline
		$n$ 	& 	$d=4$	& 	$d=6$ 	& 	$d=8$ 	\\ \hline
		9 		& 	$20$	& 	$4$		& 	$2$		\\ \hline
		10 		& 	$40$	& 	$6$		& 	$2$		\\ \hline
		11 		& 	$72$	& 	$12$	& 	$2$		\\ \hline
	\end{tabular}
	\caption{Maximum number of vertices in the hypercubes $Q_n$ at mutual distances at least $d$ (see~\cite{AgrVarZeg01} 
		for a more detailed table).}	
	\label{tbl:codes}
\end{table}
One obtains a lower bound on $\chi_\rho^B(Q_n)$ as follows.
Since one half of vertices is colored by $1$, 
we can color at most $A(n,d)$ vertices with color $c$, for $c \in \set{d-2, d-1}$.
This gives the following numbers of odd vertices for given colors:
\begin{center}
	\begin{tabular}{|c||c*8{|c}||c|}
		\hline
		$n$ / color	& 	$1$		& 	$2$		& 	$3$ 	& 	$4$	& 	$5$	& 	$6$	& 	$7$	& 	$8$	& 	$9$	& \# colored vertices	\\ \hline
		9 			& 	$256$	& 	$20$	& 	$20$	& 	$4$	& 	$4$	& 	$2$	& 	$2$	& 	-	& 	-	&	308	\\ \hline
		10 			& 	$512$	& 	$40$	& 	$40$	& 	$6$	& 	$6$	& 	$2$	& 	$2$	& 	$2$	& 	$2$	&	612	\\ \hline
		11 			& 	$1024$	& 	$72$	& 	$72$	& 	$12$& 	$12$& 	$2$	& 	$2$	& 	$2$	& 	$2$	&	1200	\\ \hline
	\end{tabular}
\end{center}	
In particular, we need additional $204$, $412$, and $848$ colors for the remaining vertices in $Q_9$, $Q_{10}$, and $Q_{11}$, respectively.
Therefore, we need at least $211$, $421$, and $857$ colors for the respective hypercubes, meaning that 
the current upper bounds for $Q_9$ and $Q_{10}$ are exact provided that the packing chromatic number and 
the packing chromatic number respecting bipartition coincide.

\begin{proposition}
	Assuming Conjecture~\ref{conj:oddones}, 
	$\chip(Q_9) = 211$, $\chip(Q_{10}) = 421$, and $\chip(Q_{11}) \ge 857$.
\end{proposition}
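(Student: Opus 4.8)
The plan is to formalize the counting argument that immediately precedes the statement, thereby establishing the lower bounds $\chi_\rho^B(Q_9)\ge 211$, $\chi_\rho^B(Q_{10})\ge 421$, and $\chi_\rho^B(Q_{11})\ge 857$, and then to feed these into Conjecture~\ref{conj:oddones} together with the upper bounds from Table~\ref{tbl:bounds}.

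First I would fix $n\in\{9,10,11\}$ and take an arbitrary packing coloring of $Q_n$ that respects bipartition. Recoloring with $1$ every even vertex that is not already coloured $1$ keeps the color-$1$ class independent and inside one part and uses no new color, so I may assume all $2^{n-1}$ even vertices have color $1$; hence all $2^{n-1}$ odd vertices have colors at least $2$. Since any two odd vertices of $Q_n$ lie at even distance, two odd vertices of the same color $c\ge 2$ lie at distance at least $d_c$, the smallest even integer that is at least $c+1$ (so $d_c=c+1$ for odd $c$ and $d_c=c+2$ for even $c$); therefore the color-$c$ class has at most $A(n,d_c)$ vertices. Because $A(n,\cdot)$ is non-increasing and $c\mapsto d_c$ is non-decreasing, $A(n,d_c)$ is non-increasing in $c$, so among all sets of $k-1$ colors at least $2$ the sum $\sum_c A(n,d_c)$ is largest for the colors $2,3,\dots,k$; consequently, if a bipartition-respecting coloring uses $k$ colors in total, then $2^{n-1}\le\sum_{c=2}^{k}A(n,d_c)$.

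It remains to substitute the values of $A(n,d)$ and solve for $k$. The entries for $d\in\{4,6,8\}$ are those of Table~\ref{tbl:codes}; the equalities $A(10,10)=A(11,10)=2$ follow from the Plotkin bound (equivalently, no three binary vectors of length at most $11$ are pairwise at distance at least $10$), and $A(n,d)=1$ once $d$ exceeds the diameter $n$. For $n=9$ this gives $2\cdot 20+2\cdot 4+2\cdot 2+(k-7)\ge 256$, so $k\ge 211$; for $n=10$ it gives $2\cdot 40+2\cdot 6+2\cdot 2+2\cdot 2+(k-9)\ge 512$, so $k\ge 421$; for $n=11$ it gives $2\cdot 72+2\cdot 12+2\cdot 2+2\cdot 2+(k-9)\ge 1024$, so $k\ge 857$ (in each case $k$ is plainly well past $7$, resp.\ $9$, so the tail is as written). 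This yields the three stated lower bounds on $\chi_\rho^B$.

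Finally, under Conjecture~\ref{conj:oddones} we have $\chip(Q_n)=\chi_\rho^B(Q_n)$, so the bounds pass to $\chip$; combined with $\chip(Q_9)\le 211$ and $\chip(Q_{10})\le 421$ from Table~\ref{tbl:bounds} this forces $\chip(Q_9)=211$ and $\chip(Q_{10})=421$, while for $n=11$ only the lower bound $\chip(Q_{11})\ge 857$ survives (the best known upper bound there being $881$). I do not expect a genuine obstacle here: the only points requiring care are the reduction to colorings that give every even vertex color $1$, the monotonicity of $A(n,d_c)$ in $c$ that justifies ``use the smallest colors,'' and the sourcing of the few values $A(n,d)$ not contained in Table~\ref{tbl:codes}.
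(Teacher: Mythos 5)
Your proposal is correct and follows essentially the same counting argument the paper gives in the discussion preceding the proposition: color all even vertices $1$, bound each color class of odd vertices by $A(n,d_c)$ with $d_c$ the least even integer exceeding $c$, sum, and compare with the upper bounds of Table~\ref{tbl:bounds} under Conjecture~\ref{conj:oddones}. Your additions (the explicit reduction to colorings where every even vertex gets color $1$, the monotonicity of $A(n,d_c)$ in $c$, and the Plotkin-bound justification of $A(10,10)=A(11,10)=2$) merely make rigorous what the paper leaves implicit, and your arithmetic matches the paper's.
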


\section{Conclusion}
\label{sec:con}

%\cmt{BL}{
%Note that it is not a trivial task even to find the maximum set of 
%vertices at pairwise distance at least $4$ in a hypercube of dimension $9$.}

In this paper, we presented a construction of packing colorings of hypercubes
using codes which arose from the extended Hamming codes. 
The coloring thus resulted in using every color at most as many times as the previous colors.
Namely, when creating a packing coloring, it seems natural that the lower colors are used 
more often. But is this really always the case? 
\begin{question}
	Is it true that for any graph $G$, 
	there always exists an optimal packing coloring such that
	the sequence $\set{n_i}_1^{\chip(G)}$,
	where $n_i$ is the number of appearances of color $i$, is non-increasing?
\end{question}

One could also consider a generalization of hypercubes, 
where instead of $K_2$, we deal with Cartesian products of complete graphs $K_q$.
Let $K_q^n$ denote the graph obtained by taking the Cartesian product of $n$ copies of $K_q$ 
for some positive integers $n$ and $q$.
The graph $K_q^n$ is of dimension $n$ with the vertex set $V(K_q^n) = \{0,\ldots,q-1\}^n$ 
and with edges between two vertices if and only if they differ in exactly one coordinate. 
Hence, $K_q^n$ has exactly $q^n$ vertices.
The graph $K_q^n$ can also be defined recursively as $K_q^n = K_q^{n-1} \square K_q$. 
Additionally, $K_q^n$ is a $q$-partite graph with each part of size $q^{n-1}$ and diameter $n$ 
(unless $q = 1$, in which case $K_q^n$ is isomorphic to $K_1$). 

In the case of $n = 2$ (the resulting graph is also known as the rook's graph),
determining the packing chromatic number is fairly easy.
\begin{observation}
	Let $q$ be any positive integer. 
	Then $\chip(K_q^2) = q^2 - q + 1$. 
\end{observation}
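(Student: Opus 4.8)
The plan is to isolate two elementary structural facts about $K_q^2$ and then do a counting argument. The first fact is that $K_q^2$ has diameter $2$, which forces every color other than $1$ to be used at most once; the second is that the independence number of $K_q^2$ is exactly $q$, which bounds how many vertices can receive color $1$. Together these give the lower bound $\chip(K_q^2)\ge q^2-q+1$, and a one-line explicit coloring gives the matching upper bound.

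First I would verify the diameter claim: since $K_q^2=K_q\square K_q$ and two vertices are adjacent exactly when they differ in one coordinate, two vertices differing in both coordinates have a common neighbor (change one coordinate, then the other), so every pair of distinct vertices is at distance $1$ or $2$. Hence for any color $c\ge 2$, two vertices of color $c$ would need to be at distance at least $c+1\ge 3$, which is impossible; so each color $c\ge 2$ appears on at most one vertex. Next I would observe that the set of vertices of color $1$ is an independent set of $K_q^2$, i.e.\ a set of cells no two of which share a coordinate, and any such set has size at most $q$ (with $q$ attained, e.g.\ by $\set{(j,j) : 0\le j\le q-1}$). Therefore a packing coloring with $k$ colors covers at most $q+(k-1)$ vertices, and since it must cover all $q^2$ vertices we get $k\ge q^2-q+1$.

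For the upper bound I would exhibit the coloring directly: give color $1$ to the $q$ cells $(j,j)$, which form an independent set, and give the remaining $q^2-q$ cells the pairwise distinct colors $2,3,\dots,q^2-q+1$ in any order. Colors $2$ through $q^2-q+1$ are each used exactly once, so their packing constraints are vacuous, and color $1$ is used on an independent set, so its constraint holds; this is a valid packing coloring with $q^2-q+1$ colors, hence $\chip(K_q^2)\le q^2-q+1$. I do not expect a genuine obstacle here: the only step needing an actual (but trivial) argument is the diameter bound, which is precisely what makes all colors above $1$ behave like unique colors; the rest is bookkeeping. I would also note in passing the degenerate case $q=1$, where $K_1^2\cong K_1$ and the formula correctly evaluates to $1$.
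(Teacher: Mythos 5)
Your proof is correct and follows essentially the same route as the paper's: the diameter-$2$ observation forces every color except $1$ to be unique, and the bound of $q$ on an independent set (the paper phrases this via the $q$-partition into parts of size $q$) handles color $1$, with the diagonal giving the matching construction. Your version is slightly more explicit about why the lower bound holds, but there is no substantive difference.
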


\begin{proof}
	If $q = 1$, then $K_q^n$ is isomorphic to $K_1$ for any $n$ and so $\chip(K_q^n) = 1$.
	We may thus assume that $q \ge 2$. 
	Since $K_q^2$ has diameter $2$, all the colors with the exception of color $1$ must be unique. 
	Since $K_q^2$ is a $q$-partite graph with each part of size $q$, 
	we can color one of the parts with color $1$ and the remaining $q^2-q$ vertices must each receive a unique color, thus $\chip(K_q^2) = q^2 - q + 1$.
\end{proof}

Constructing a packing coloring for graphs with $n \ge 3$ will be harder.
Even using the approach presented in this paper seems to be much more complex, 
since one has more vertices in every base graph $K_q$ which may 
receive several distinct colors; 
in our case, one of the vertices was always of color $1$ 
and the other of a color given by a subset of extended Hamming codes or a unique one.
\begin{problem}
	Given positive integers $q \ge 3$ and $n \ge 2$, determine a (non-trivial) upper bound for $\chip(K_q^n)$.
\end{problem}

%In the general case, the exact value of the packing chromatic number of $K_q^n$ is much harder to obtain.
%In the previous sections, we used binary Hamming codes, i.e., the Hamming codes over the field $\mathbb{Z}_2$.
%%In general, Hamming codes are defined over any finite field $F_q$, where $q = p^k$, for some prime $p$ and positive integer $k$
%\cmt{(see, e.g., check plus ref)}.
%%For any positive integer $m$, let $n = (q^m-1)/(q-1)$ and let $M_m(q)$ be the $q$-ary $m\times n$ matrix 
%whose columns consist of a maximum number of pairwise linearly independent non-zero $q$-ary vectors of length $m$ in the lexicographic ordering. 
%%The so defined matrix is called the {\it check matrix} of the {\it generalized Hamming code} $H(q,m)$.
%The Hamming distance between two $q$-ary codewords and the Hamming distance of a code $C$ are defined analogously to the binary case.
%%Note that generalized Hamming codes are linear codes with minimum distance $3$, length $n$ and size $q^{n-m}$.
%\cmt{Do generalized Hamming codes exist for all positive integers $q$ and $m$? I believe so.}

We conclude the paper by answering a problem of Bre\v{s}ar et al.~\cite[Problem~3]{BreKlaRal07}. 
The authors asked the following.
\begin{question}
	Is it true for arbitrary graphs $G$ and $H$ that
    $$
    	\chi_\rho(G \Box H) \le \max \{ \chi_\rho(G) \cdot |H|, \chi_\rho(H) \cdot |G| \}\,?
    $$
\end{question}

The answer to the problem is negative, by taking, e.g., $G=H=Q_{7}$. 
The packing chromatic number of the hypercube $Q_{14} = Q_7 \Box Q_7$ is at least $6439$;
in particular, using~\cite[Table~1]{AgrVarZeg01},
we deduce that the maximum number of vertices of color $i$, for $1\le i \le 13$, is at most $9958$,
and so there must be at least $6426$ unique vertices.
On the other hand, $\chip(Q_7) = 49$ and $|Q_7| = 128$, yielding $6272$ in total.
This is another example that behavior of the packing chromatic number is much more complex than one would wish for.

\paragraph{Acknowledgement.} 
P.~Gregor acknowledges the financial support by Czech Science Foundation grant GA 22--15272S.
B.~Lu\v{z}ar and K. \v{S}torgel were partially supported by the Slovenian Research Agency Program P1--0383 and the projects J1--3002 and J1--4008.
K.~\v{S}torgel also acknowledges support from the Young Researchers program.

\bibliographystyle{plain}
\bibliography{mainBib}

\end{document}